\documentclass[11pt,psamsfonts]{amsart}
\usepackage{amsmath}
\usepackage{amsthm}
\usepackage{amssymb}
\usepackage{amscd}
\usepackage{amsfonts}
\usepackage{amsbsy}
\usepackage[latin1]{inputenc}
\usepackage{epsfig,afterpage}
\usepackage[dvips]{psfrag}
\usepackage{psfrag}
\usepackage[all]{xy}
\usepackage{color}

\newcommand{\R}{\ensuremath{\mathbb{R}}}

\newcommand{\N}{\ensuremath{\mathbb{N}}}

\newcommand{\Z}{\ensuremath{\mathbb{Z}}}

\usepackage{overpic}

\newtheorem {theorem} {Theorem}
\newtheorem {proposition}  {Proposition}

\newtheorem {definition} [theorem] {Definition}
\newtheorem {remark} {Remark}
\newtheorem {example} {Example}

\newtheorem {corollary} {Corollary}

\begin{document}

\title[Minimal Sets and Poincaré-Bendixson Theorem in NSVF] {On Poincaré-Bendixson Theorem and Non-Trivial Minimal Sets in Planar Nonsmooth Vector Fields}

\author[C.A. Buzzi, T. de Carvalho and R.D. Euzébio]
{Claudio A. Buzzi$^1$, Tiago de Carvalho$^2$ and\\ Rodrigo D. Euzébio$^1$}

\address{$^1$ IBILCE--UNESP, CEP 15054--000,
S. J. Rio Preto, S\~ao Paulo, Brazil}

\address{$^2$ FC--UNESP, CEP 17033--360,
Bauru, São Paulo, Brazil}

\email{buzzi@ibilce.unesp.br}

\email{tcarvalho@fc.unesp.br}

\email{rodrigo.euzebio@sjrp.unesp.br}

\subjclass[2010]{Primary 34A36, 34A12, 34D30}

\keywords{nonsmooth vector fields, Poincaré-Bendixson theory, minimal sets, limit sets}
\date{}
\dedicatory{} \maketitle

\begin{abstract}
In this paper some qualitative and geometric aspects of nonsmooth vector fields theory are discussed. In the class of nonsmooth systems, that do not present sliding regions, a Poincaré-Bendixson Theorem is presented. A minimal set in planar Filippov systems not predicted in classical Poincaré-Bendixson theory and whose interior is non-empty is exhibited. The concepts of limit sets, recurrence and minimal sets for nonsmooth systems are defined and compared with the classical ones. Moreover some differences between them are pointed out.
\end{abstract}

\section{Introduction}\label{secao introducao}

\subsection{Setting the problem}\label{secao colocacao do problema}

Nonsmooth vector fields (NSVFs, for short) have become
certainly one of the common frontiers between Ma\-the\-matics and
Physics or Engineering.
Many authors have contributed to the study of NSVFs  (see
for instance the pioneering work \cite{Fi} or the didactic works \cite{diBernardo-livro,Marco-enciclopedia}, and references therein about details of
these multi-valued vector fields).  In our approach Filippov's convention is considered. So, the vector field of the model is discontinuous across a \textit{switching manifold} and it is possible for its trajectories to be
confined onto the switching manifold itself. The occurrence of such
behavior, known as sliding motion, has been reported in a
wide range of applications. We can find important examples in electrical circuits having switches, in mechanical devices in which components collide into each other, in problems with friction, sliding or squealing, among others.

For planar smooth vector fields there is a very developed theory nowadays. This theory is based in some important results. A now exhaustive list of such results include: \textit{The Existence and Uniqueness Theorem}, \textit{Hartman-Grobman Theorem}, \textit{Poincaré-Bendixson Theorem} and \textit{The Peixoto Theorem} among others. A very interesting and useful subject is to answer if these results are true or not at the NSVFs scenario. It is already known that the first theorem is not true (see Example \ref{exemplo varios conj limite} and Figure \ref{orbits} below) and the last theorem is true (under suitable conditions, see \cite{Soto-Ana}). Another extension to NSVFs of classical results on planar smooth vector fields include the concept of \textit{Poincaré Index} of a vector field in relation to a curve, as stated in \cite{Eu-canard-cycles}.

The specific topic addressed in this paper concern with a Poincaré-Ben\-dix\-son Theorem for NSVFs.
In smooth vector fields, under relatively weak hypothesis, Poincaré-Bendixson Theorem tells us which kind of limit set can arise on an open region of the Euclidean space $\mathbb{R}^{2}$. In particular, minimal sets in smooth vector fields are contained in the limit sets (this fact can not be observed in NSVFs as we show below). As far as we know, in the context of NSVFs, this theme has not been treated in the literature until now.

The paper is organized as follows: In Subsection \ref{secao enunciados} the main results are stated. In Section \ref{secao preliminares}  some of the standard theory on NSVFs, a brief introduction about Filippov systems and new definitions on this scenario are presented. In Section \ref{secao minimal} an example of a non-trivial minimal set of NSVFs is exhibited. Section \ref{secao teo poincare-bendixson} is devoted to prove the main results of the paper and discuss some aspects on it. In particular we prove a theorem very similar to the Poincaré-Bendixson Theorem to the case where sliding motion is not allowed. We also show that can occur a lot of nonstandard phenomena by considering escaping and sliding points on the switching manifold. Some of them are pointed out and compared with the classical theory of smooth vector fields. Moreover, the example given in Section \ref{secao minimal} ensures that a Poincaré-Bendixson Theorem for NSVFs with sliding motion can not be stated without extra hypothesis.

\subsection{Statement of the main results}\label{secao enunciados}
In this paper we are concerned with minimal sets and limit sets of NSVFs on the plane. For the classical theory it is well known the Poincaré-Bendixson Theorem that establishes that the limit sets of a smooth vector field is either an equilibrium point or a periodic orbit or a graph. In the main result of our paper we have an analogous result for NSVFs without sliding motion. In fact, in this case we add to the classical limit sets a pseudo-graph, a pseudo-cycle and an equilibrium point under the switching manifold (for details, see Section \ref{secao preliminares}).

Let $V$ be an arbitrarily small neighborhood of $0\in\R^2$. We
consider a codimension one manifold $\Sigma$ of $\R^2$ given by
$\Sigma =f^{-1}(0),$ where $f:V\rightarrow \R$ is a smooth function
having $0\in \R$ as a regular value (i.e. $\nabla f(p)\neq 0$, for
any $p\in f^{-1}({0}))$. We call $\Sigma$ the \textit{switching
manifold} that is the separating boundary of the regions
$\Sigma^+=\{q\in V \, | \, f(q) \geq 0\}$ and $\Sigma^-=\{q \in V \,
| \, f(q)\leq 0\}$. We can assume, locally around the origin of $\R^2$, that
$f(x,y)=y.$

Designate by $\chi$ the space of C$^r$-vector fields on
$V\subset\R^2$, with $r \geq 1$
large enough for our purposes. Call $\Omega$ the space of vector
fields $Z: V \rightarrow \R ^{2}$ such that
\begin{equation}\label{eq Z}
 Z(x,y)=\left\{\begin{array}{l} X(x,y),\quad $for$ \quad (x,y) \in
\Sigma^+,\\ Y(x,y),\quad $for$ \quad (x,y) \in \Sigma^-,
\end{array}\right.
\end{equation}
where $X=(X_1,X_2) , Y = (Y_1,Y_2) \in \chi$.  The trajectories of
$Z$ are solutions of  ${\dot q}=Z(q)$ and we accept it to be
multi-valued at points of $\Sigma$. The basic results of
differential equations in this context were stated by Filippov in
\cite{Fi}.

In the sequel we state the main results of the paper. They deal with limit sets of trajectories and limit sets of points. For a precise definition of these objects see Definition \ref{limitset}.

\begin{theorem}\label{PB}
Let $Z=(X,Y) \in \Omega$. Assume that $Z$ does not have sliding motion and it has a global trajectory $\Gamma_{Z}(t,p)$ whose positive trajectory $\Gamma^{+}_{Z}(t,p)$ is contained in a compact subset $K \subset V$. Suppose also that $X$ and $Y$ have a finite number of critical points in $K$, no one of them in $\Sigma$, and a finite number of tangency points with $\Sigma$. Then, the $\omega$-limit set $\omega(\Gamma_{Z}(t,p))$ of $\Gamma_{Z}(t,p)$ is one of the following objects:
\begin{itemize}
\item[(i)] an equilibrium of $X$ or $Y$;
\item[(ii)] a periodic orbit of $X$ or $Y$;
\item[(iii)] a graph of $X$ or $Y$;
\item[(iv)] a pseudo-cycle;
\item[(v)] a pseudo-graph;
\item[(vi)] a s-singular tangency.
\end{itemize}
\end{theorem}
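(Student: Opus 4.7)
The plan is to adapt the classical proof of the Poincaré-Bendixson theorem, modifying the notion of local transverse section so that it respects the discontinuity of $Z$ on $\Sigma$. First I would split according to whether $\Gamma_Z^+(t,p)$ meets $\Sigma$ in finitely or infinitely many points. If it meets $\Sigma$ only finitely often, then from some time on $\Gamma_Z^+$ coincides with a trajectory of $X$ (or of $Y$) contained in the compact set $K\cap\Sigma^+$ (or $K\cap\Sigma^-$), so the classical Poincaré-Bendixson theorem, combined with the hypothesis that $X$ and $Y$ have only finitely many critical points in $K$ and finitely many tangencies with $\Sigma$, yields one of the alternatives (i)--(iii).

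The main work is therefore in the case when $\Gamma_Z^+(t,p)$ crosses $\Sigma$ infinitely often. Pick any $q\in\omega(\Gamma_Z(t,p))$, which exists by compactness of $K$. If $q\in\Sigma^+\setminus\Sigma$ or $q\in\Sigma^-\setminus\Sigma$, I would construct a local transverse section $\tau$ to $X$ (respectively $Y$) through $q$ disjoint from $\Sigma$ and show that the successive returns of $\Gamma_Z^+$ to $\tau$ are monotone along $\tau$. The standard argument uses the Jordan curve theorem applied to the closed curve formed by an orbit arc together with a subarc of $\tau$; this is still applicable because, although $\Gamma_Z$ bends its direction at each crossing of $\Sigma$, it remains a simple continuous curve, and the no-sliding hypothesis prevents orbits from re-entering the bounded region without crossing $\tau$. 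Monotonicity then forces the $\omega$-limit either to coincide with a pseudo-cycle (case (iv)) or to be a pseudo-graph (case (v)) whose vertices are critical points of $X$, $Y$ and tangencies with $\Sigma$.

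When $q\in\Sigma$ is itself a crossing point, I would glue an arc transverse to $X$ in $\Sigma^+$ with an arc transverse to $Y$ in $\Sigma^-$, meeting at $q$, to form a single hybrid section that every trajectory of $Z$ passing near $q$ crosses in a well-defined direction; the previous monotonicity argument then applies verbatim. The remaining possibility is that every accumulation point of $\Gamma_Z^+$ on $\Sigma$ is a tangency of $X$ or $Y$. Since tangencies are finite in number, at least one of them, say $q_0$, must lie in $\omega(\Gamma_Z(t,p))$; a local analysis around $q_0$, ruling out the previous return mechanisms, should identify $q_0$ as an $s$-singular tangency, giving case (vi).

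The principal obstacle will be the precise construction of the hybrid transverse section at crossing points of $\Sigma$ and the exclusion of exotic $\omega$-limit configurations that combine regular orbit pieces with accumulation at tangencies without forming a genuine pseudo-graph. I expect the finiteness of critical points and of tangencies, together with the absence of sliding (which forbids trajectories of $Z$ from residing for positive time on $\Sigma$ and thereby creating new recurrences there), to be exactly what is needed to close this dichotomy.
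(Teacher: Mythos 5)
Your overall architecture (split on whether $\Gamma^{+}_{Z}$ meets $\Sigma$ finitely or infinitely often, reduce the first case to the classical theorem, and in the second case run a Jordan-curve/monotone-return argument) is close in spirit to the paper's, which also reduces to classical Poincar\'e--Bendixson away from $\Sigma$ and uses a Jordan-curve/flow-box argument to close up a pseudo-cycle. The main structural difference is that the paper does not build transverse sections at all: it uses the sequence of intersection points $p_i=\Gamma_Z(t_i,p)\in\Sigma$ itself, splits according to whether the set $T(p)$ of crossing-type intersections is finite or infinite, and in the finite case runs a purely combinatorial arc-counting argument (at most $2^{n_p}+t_p$ arcs between consecutive $\Sigma$-intersections, a subcollection of which forms the pseudo-cycle). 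At a sewing point $\Sigma$ is itself locally transverse to both $X$ and $Y$, so your ``hybrid section'' is an unnecessary complication; an arc of $\Sigma$ already does the job.

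There are, however, two genuine gaps. First, you never confront the failure of forward uniqueness at regular even--even (visible--visible) tangencies. These are exactly the points the paper isolates as $N(p)$: a global trajectory may pass through such a point more than once along \emph{different} branches, so the orbit piece between two consecutive returns to your section need not be a simple arc and the Jordan-curve step does not apply ``verbatim''; the hypothesis that tangencies are finite in number is what tames this, and your sketch never invokes it at the step where it is needed. Second, your route to case (vi) is set up on the wrong dichotomy: ``every accumulation point on $\Sigma$ is a tangency'' does not identify an s-singular tangency, because visible tangencies are compatible with the return mechanism and still yield pseudo-cycles or pseudo-graphs. The relevant statement is that if $\omega(\Gamma_Z(t,p))$ contains an invisible--invisible tangency $x_0$, then, since $\phi_Z(t,x_0)\equiv x_0$ by definition and trajectories starting near $x_0$ spiral into it, one gets $\omega(\Gamma_Z(t,p))=\{x_0\}$; this local spiralling-in analysis is the actual content of case (vi), and ``ruling out the previous return mechanisms'' does not produce it. Finally, the claim that monotonicity ``forces'' the $\omega$-limit to be a pseudo-cycle or pseudo-graph compresses the entire second half of the classical proof (passing to the orbit of a point of the $\omega$-limit, then splitting on the presence of equilibria of $X$ or $Y$), which is precisely where the paper spends its remaining effort.
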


As consequence, since the uniqueness of orbits and trajectories passing through a point is not achieved, we have the following corollary (see definitions of orbits and trajectories in Section \ref{secao preliminares}):

\begin{corollary}\label{corolario teo Poinc-Bend}
Under the same hypothesis of Theorem \ref{PB} the $\omega$-limit set $\omega(p)$ of a point $p\in V$ is one of the objects described in itens (i), (ii), (iii), (iv), (v) and (vi) or a union of them.
\end{corollary}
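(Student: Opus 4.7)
The plan is to reduce the corollary directly to Theorem \ref{PB} by unpacking the definition of the $\omega$-limit of a point in the nonsmooth setting. The only new feature relative to Theorem \ref{PB} is that forward uniqueness fails: through a single point $p$ there may pass more than one global trajectory of $Z$, the branching occurring precisely at tangency points of $X$ or $Y$ with $\Sigma$, where Filippov's convention permits several continuations.

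First I would recall the definition of $\omega(p)$ adopted in Definition \ref{limitset}: since $\Gamma_{Z}(\cdot ,p)$ is multi-valued, one sets
\[
\omega(p)=\bigcup_{\Gamma}\omega(\Gamma),
\]
where $\Gamma$ ranges over all global trajectories of $Z$ issuing from $p$. By hypothesis at least one such trajectory $\Gamma$ has its positive semi-trajectory contained in the compact set $K\subset V$; and since branching can occur only at the finite set of tangencies of $X$ and $Y$ with $\Sigma$ inside $K$, every branch obtained from $p$ by choosing a different continuation at a tangency also stays in $K$.

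Second, I would verify that the hypotheses of Theorem \ref{PB} are inherited by each branch $\Gamma$: the vector fields $X$ and $Y$ are unchanged, so they retain the same finite set of critical points in $K$ (none on $\Sigma$) and the same finite set of tangency points with $\Sigma$; by the previous step the positive semi-trajectory of $\Gamma$ lies in $K$. Thus Theorem \ref{PB} applies to every such $\Gamma$ and furnishes $\omega(\Gamma)$ as one of the six objects (i)--(vi). Taking the union over $\Gamma$ gives that $\omega(p)$ is either one of these objects or a union of finitely many of them, which is the statement of the corollary.

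The main point is conceptual rather than technical: without forward uniqueness, $\omega(p)$ can fail to be invariant or connected in the classical sense, so one must insist on working with the family of global trajectories through $p$ and not with a single canonical orbit. The finiteness assumptions on critical points and tangencies are exactly what controls the branching and ensures that the union defining $\omega(p)$ reduces to finitely many asymptotic types, each of which is already classified by Theorem \ref{PB}.
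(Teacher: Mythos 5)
Your argument is essentially the paper's own proof, which simply observes that by Definition \ref{limitset} the $\omega$-limit set of a point is the union of the $\omega$-limit sets of all global trajectories passing through it, and then invokes Theorem \ref{PB} for each such trajectory. The only place you go beyond the paper is the claim that every branch obtained by choosing a different continuation at a tangency automatically stays in $K$ --- this is not actually justified (an alternative continuation could in principle leave $K$), but the paper glosses over the same point, implicitly reading the compactness hypothesis as applying to each global trajectory through $p$.
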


The same holds for the $\alpha$-limit set, reversing time.

For the general case where sliding motion is allowed in $\Sigma$, we can not exhibit an analogous result. In fact, as shown in Example \ref{exemplo1} and in Propositions \ref{minimal} and \ref{proposicao trajetoria nao densa}, there exist \textit{non-trivial minimal sets} (i.e., minimal sets distinct of an equilibrium point or of a closed trajectory) in this scenario.

\section{Preliminaries}\label{secao preliminares}

Consider Lie derivatives \[X.f(p)=\left\langle \nabla f(p),
X(p)\right\rangle \quad \mbox{ and } \quad X^i.f(p)=\left\langle
\nabla X^{i-1}. f(p), X(p)\right\rangle, i\geq 2
\]
where $\langle . , . \rangle$ is the usual inner product in $\R^2$.

We  distinguish the following regions on the discontinuity set
$\Sigma$:
\begin{itemize}
\item [(i)]$\Sigma^c\subseteq\Sigma$ is the \textit{sewing region} if
$(X.f)(Y.f)>0$ on $\Sigma^c$ .
\item [(ii)]$\Sigma^e\subseteq\Sigma$ is the \textit{escaping region} if
$(X.f)>0$ and $(Y.f)<0$ on $\Sigma^e$.
\item [(iii)]$\Sigma^s\subseteq\Sigma$ is the \textit{sliding region} if
$(X.f)<0$ and $(Y.f)>0$ on $\Sigma^s$.
\end{itemize}

The \textit{sliding vector field}
associated to $Z\in \Omega$ is the vector field  $Z^s$ tangent to $\Sigma^s$
and defined at $q\in \Sigma^s$ by $Z^s(q)=m-q$ with $m$ being the
point of the segment joining $q+X(q)$ and $q+Y(q)$ such that $m-q$
is tangent to $\Sigma^s$ (see Figure \ref{fig def filipov}). It is
clear that if $q\in \Sigma^s$ then $q\in \Sigma^e$ for $-Z$ and then
we  can define the {\it escaping vector field} on $\Sigma^e$
associated to $Z$ by $Z^e=-(-Z)^s$. In what follows we use the
notation $Z^\Sigma$ for both cases.  In our pictures
we represent the dynamics of $Z^\Sigma$ by double arrows.\\

\begin{figure}[!h]
\begin{center}
\psfrag{A}{$q$} \psfrag{B}{$q + Y(q)$} \psfrag{C}{$q + X(q)$}
\psfrag{D}{} \psfrag{E}{\hspace{1cm}$Z^\Sigma(q)$}
\psfrag{F}{\hspace{.7cm}$\Sigma^s$} \psfrag{G}{} \epsfxsize=5.5cm
\epsfbox{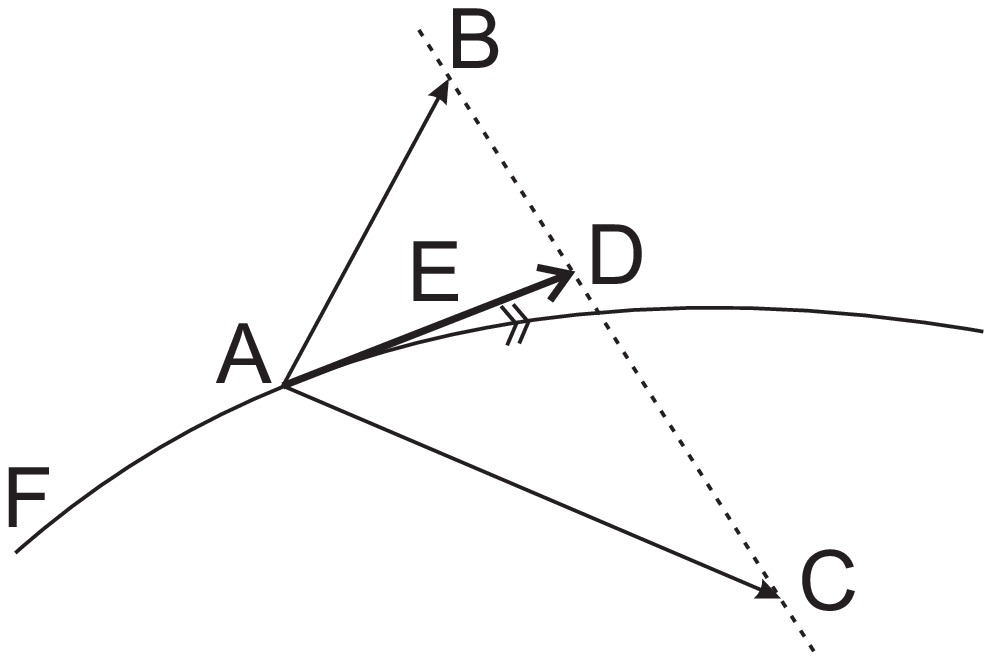} \caption{\small{Filippov's
convention.}} \label{fig def filipov}
\end{center}
\end{figure}

We say that $q\in\Sigma$ is a \textit{$\Sigma$-regular point} if
\begin{itemize}
\item [(i)] $(X.f(q))(Y.f(q))>0$
or
\item [(ii)] $(X.f(q))(Y.f(q))<0$ and $Z^{\Sigma}(q)\neq0$
(i.e., $q\in\Sigma^e\cup\Sigma^s$ and it is not an equilibrium
point of $Z^{\Sigma}$).\end{itemize}

The points of $\Sigma$ which are not $\Sigma$-regular are called
\textit{$\Sigma$-singular}. We distinguish two subsets in the set of
$\Sigma$-singular points: $\Sigma^t$ and $\Sigma^p$. Any $q \in
\Sigma^p$ is called a \textit{pseudo-equilibrium of $Z$} and it is
characterized by $Z^{\Sigma}(q)=0$. Any $q \in \Sigma^t$ is called a
\textit{tangential singularity} (or also \textit{tangency point}) and it is characterized by $(X.f(q))(Y.f(q)) =0$ ($q$ is a tangent contact
point between the trajectories of $X$ and/or $Y$ with $\Sigma$).

For a given $W\in\chi$, we say that $r$ is the \textit{contact order} of the trajectory $\Gamma_W$ of $W$ with $\Sigma$ at $p$ if $W^{k}f(p)=0$, $\forall k=0,\ldots,r-1$ and $Wf^{r}(p)\neq0$. For $W=X$ (respec. $Y$) we say that $p\in\Sigma$ is {\it invisible tangency} if the contact order $r$ of $\Gamma_X$ (respec. $\Gamma_Y$) passing through $p$ is even and $Xf^{r}(p)<0$ (respec. $Yf^{r}(p)>0$). On the other hand, for $W=X$ (respec. $Y$) we say that $p\in\Sigma$ is {\it visible tangency} if the contact order $r$ of $\Gamma_X$ (respec. $\Gamma_Y$) passing through $p$ is even and $Xf^{r}(p)>0$ (respec. $Yf^{r}(p)<0$).

A tangential singularity $p\in\Sigma^t$ is \textit{singular} if $p$ is a invisible tangency for both $X$ and $Y$.  On the other hand, a tangential singularity $p\in\Sigma^t$ is \textit{regular} if it is not singular. Figures $\ref{tangenciaregular}$ and $\ref{tangenciasingular}$ illustrate all possible cases for regular and singular tangencies, respectively.

\begin{figure}
\begin{center}
\begin{overpic}[height=0.26\paperheight]{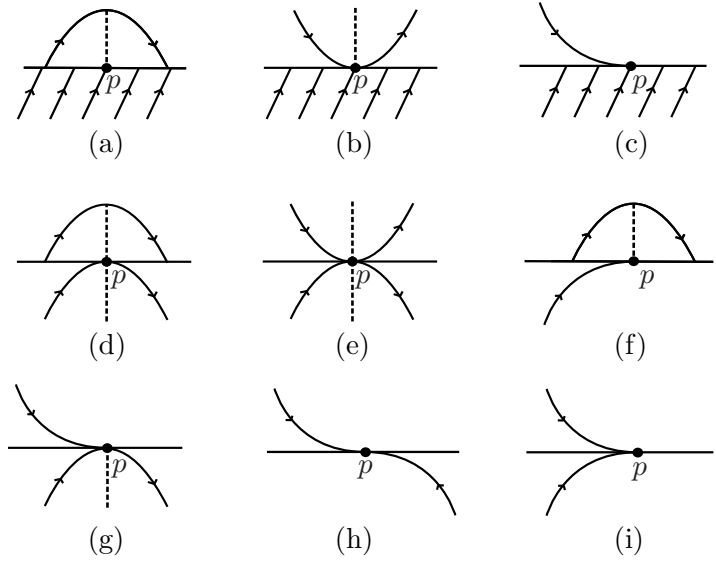}
\put(14.5,54){\color{black}{(a)}} \put(14.5,26.4){\color{black}{(d)}} \put(14.5,0){\color{black}{(g)}}
\put(48.2,54){\color{black}{(b)}} \put(48.2,26.4){\color{black}{(e)}} \put(48.2,0){\color{black}{(h)}}
\put(86.3,54){\color{black}{(c)}} \put(86.3,26.4){\color{black}{(f)}} \put(86.3,0){\color{black}{(i)}}
\put(17,62.7){$p$} \put(50.7,62.7){$p$} \put(88.8,62.7){$p$}
\put(17.7,36.1){$p$} \put(51.1,36.1){$p$} \put(89.8,36.1){$p$}
\put(17.8,10.4){$p$} \put(51.4,10.1){$p$} \put(88.8,9.9){$p$}
\end{overpic}
\caption{\small{Cases where occur regular tangential singularities. The dashed lines represent the curves where $Xf(p)=0$ or $Yf(p)=0$}.} \label{tangenciaregular}
\end{center}
\end{figure}

\begin{figure}
\begin{center}
\begin{overpic}[height=0.07\paperheight]{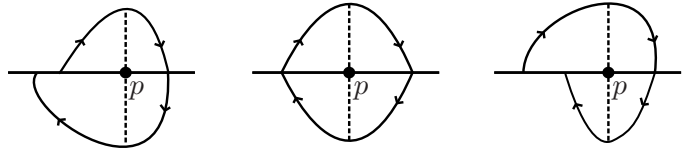}
\put(51.3,8.2){$p$} \put(89.7,8.2){$p$} \put(18,8.2){$p$}
\end{overpic}
\caption{\small{The particular cases where occur singular tangential singularities.}} \label{tangenciasingular}
\end{center}
\end{figure}

\begin{remark}
Let $p$ be in $\Sigma$ and $\Gamma_X$ (respec. $\Gamma_Y$) be the trajectory of $X$ (respec. $Y$) passing through $p$. Consider $V_{p}=V_{p}^{-}\cup\{p\}\cup V_{p}^{+}$, where $V_{p}^{-}=\{x\in\Sigma;x<p\}$ and $V_{p}^{+}=\{x\in\Sigma;x>p\}$. Let $m$ be the sum of the contact order of the trajectories $\Gamma_X$ of $X$ and $\Gamma_Y$ of $Y$ with $\Sigma$ at $p$.  It is possible to give a characterization of the behavior of $Z \in \Omega$ in the neighborhood $V_{p}$ of $p$ in terms of $m$. In fact, if $m$ is odd, then $V_{p}^{-}\subset\Sigma^{c}$ and $V_{p}^{+}\subset\Sigma^{s}\cup\Sigma^{e}$ (or vice versa, depending on the orientation). See Figure $\ref{tangenciaregular}$, items (a), (b), (f) and (g). On the other hand, if $m$ is even, we have three cases: (i) $V_{p}\setminus\{p\}$ is contained in $\Sigma^{c}$; (ii) $V_{p}\setminus\{p\}$ is contained either in $\Sigma^{s}$ or $\Sigma^{e}$; (iii) $V_{p}^{-}\subset\Sigma^{s}$ and $V_{p}^{+}\subset\Sigma^{e}$ or vice versa. This cases are represented in the Figures \ref{tangenciasingular} and \ref{tangenciaregular}, items (c), (d), (e), (h) and (i).
\end{remark}

Let $W\in\chi$. Then we denote its flow by $\phi_{W}(t,p)$. Thus,
$$
\left\{
  \begin{array}{ll}
    \dfrac{d}{dt}\phi_{W}(t,p) = W(\phi_{W}(t,p)),\\\\
    \phi_{W}(0,p)=p,
  \end{array}
\right.
$$
where $t \in I= I(p,W)\subset \R$, an interval depending on $p \in
V$ and $W$.

\smallskip

\begin{definition}\label{definicao trajetorias}
The \textbf{local trajectory (orbit)} $\phi_{Z}(t,p)$ of a NSVF given by \eqref{eq Z} is
defined as follows:
\begin{itemize}
\item For $p \in \Sigma^+ \backslash \Sigma$ and $p \in \Sigma^{-} \backslash \Sigma$ the trajectory is
given by $\phi_{Z}(t,p)=\phi_{X}(t,p)$ and
$\phi_{Z}(t,p)=\phi_{Y}(t,p)$ respectively, where $t \in I$.

\item For $p \in \Sigma^{c}$ such that $X.f(p)>0$, $Y.f(p)>0$ and  taking the
origin of time at $p$, the trajectory is defined as
$\phi_{Z}(t,p)=\phi_{Y}(t,p)$ for $t \in I \cap \{ t \leq 0 \}$ and
$\phi_{Z}(t,p)=\phi_{X}(t,p)$ for $t \in I \cap \{ t \geq 0 \}$. For
the case $X.f(p)<0$ and $Y.f(p)<0$  the definition is the same
reversing time.

\item For $p \in \Sigma^e$ and  taking the
origin of time at $p$, the trajectory is defined as
$\phi_{Z}(t,p)=\phi_{Z^{\Sigma}}(t,p)$ for $t \in I \cap \{ t \leq 0 \}$ and
$\phi_{Z}(t,p)$ is either $\phi_{X}(t,p)$ or $\phi_{Y}(t,p)$ or $\phi_{Z^{\Sigma}}(t,p)$ for $t \in I \cap \{ t \geq 0 \}$. For
the case $p \in \Sigma^s$  the definition is the same
reversing time.

\item For $p$ a regular tangency point and  taking the
origin of time at $p$, the trajectory is defined as
$\phi_{Z}(t,p)=\phi_{1}(t,p)$ for $t \in I \cap \{ t \leq 0 \}$ and
$\phi_{Z}(t,p)=\phi_{2}(t,p)$ for $t \in I \cap \{ t \geq 0 \}$, where each $\phi_{1},\phi_{2}$ is either $\phi_{X}$ or $\phi_{Y}$ or $\phi_{Z^{\Sigma}}$.

\item For $p$ a singular tangency point
    $\phi_{Z}(t,p)=p$ for all $t \in \R$.
\end{itemize}
\end{definition}

\begin{definition}\label{definicao orbita}
A \textbf{global trajectory (orbit)} $\Gamma_{Z}(t,p_0)$ of $Z\in \chi$ passing through $p_0$ is a union $$\Gamma_{Z}(t,p_0)=\bigcup_{i\in \Z} \{ \sigma_i(t,p_i); t_i \leq t \leq t_{i+1} \}$$of preserving-orientation local trajectories $\sigma_i(t,p_i)$ satisfying $\sigma_i(t_{i+1},p_i)=\sigma_{i+1}(t_{i+1},p_{i+1})=p_{i+1}$ and $t_i \rightarrow \pm \infty$ as $i \rightarrow \pm \infty$. A global trajectory is a \textbf{positive} (respectively, \textbf{negative}) \textbf{global trajectory} if $i \in \N$ (respectively,  $-i \in \N$) and $t_0 = 0$.
\end{definition}

\begin{definition}\label{limitset}
Given $\Gamma_{Z}(t,p_0)$ a global trajectory, the set $\omega(\Gamma_{Z}(t,p_0)) = \{ q \in V; \exists \, (t_{n}) \mbox{ satisfying } \Gamma_{Z}(t_n,p_0) \rightarrow q \mbox{ when } t\rightarrow + \infty \}$ (respectively \linebreak $\alpha(\Gamma_{Z}(t,p_0)) = \{ q \in V; \exists \, (t_{n}) \mbox{ satisfying } \Gamma_{Z}(t_n,p_0) \rightarrow q \mbox{ when } t\rightarrow - \infty \}$) is called $\mathbf{\omega}$\textbf{-limit} (respectively $\mathbf{\alpha}$\textbf{-limit}) \textbf{set of} $\mathbf{\Gamma_{Z}(t,p_0)}$. The $\omega$-limit (respectively $\alpha$-limit) set \textbf{of a point} $\mathbf{p}$ is the union of the $\omega$-limit (respectively $\alpha$-limit) sets of all global trajectories passing through $p$.\end{definition}

\begin{example}\label{exemplo varios conj limite}
Consider Figure \ref{orbits}. We observe that the global orbit passing through $q\in\Sigma$ is not necessarily unique. In fact, according to the third bullet of Definition \ref{definicao trajetorias}, the positive local trajectory by the point $q\in\Sigma$ can provide three distinct paths, namely, $\Gamma_{1}$, $\Gamma_{2}$ and $\Gamma_{3}$. In particular, it is clear that the Existence and Uniqueness Theorem is not true in the scenario of NSVFs. Moreover, the $\omega$-limit set of $\Gamma_{i}$, $i=1,2,3$ is, respectively, a focus, a pseudo-equilibrium and a limit cycle and, consequently, the $\omega$-limit set of $q$ being the union of these objects is not a connected set. This fact is not predicted in the classical theory. Note that the $\alpha$-limit set of $q$ is a connected set composed by the pseudo-equilibrium $p$.

\begin{figure}[!h]
\begin{center}
\begin{overpic}[height=5cm]{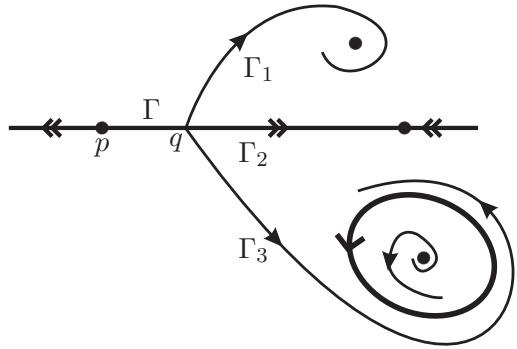}
\put(21,43){$p$}
\put(35,44){$q$}
\put(49,57){$\Gamma_{1}$}
\put(48,41){$\Gamma_{2}$}
\put(48,23){$\Gamma_{3}$}
\put(30,49){$\Gamma$}
\end{overpic}
\caption{\small{An orbit by a point is not necessarily unique.}} \label{orbits}
\end{center}
\end{figure}

\end{example}

\begin{definition}\label{definicao pseudo cycle} Consider
 $Z=(X,Y) \in \Omega.$ A closed global orbit $\Delta$ of $Z$ is a:
 \begin{itemize}
 \item[(i)] \textbf{pseudo-cycle} if $\Delta \cap \Sigma \neq
\emptyset$ and it does not contain neither equilibrium nor pseudo-equilibrium (See Figure \ref{fig canard}).
 \item[(ii)] \textbf{pseudo-graph} if $\Delta \cap \Sigma \neq
\emptyset$ and it is a union of equilibria,
pseudo equilibria and orbit-arcs of
$Z$ joining these points (See Figure \ref{fig grafico}).
 \end{itemize}
\end{definition}

\begin{figure}[h!]
\begin{minipage}[b]{0.311\linewidth}
\begin{center}
\psfrag{A}{$\Gamma$}
\epsfxsize=3.3cm \epsfbox{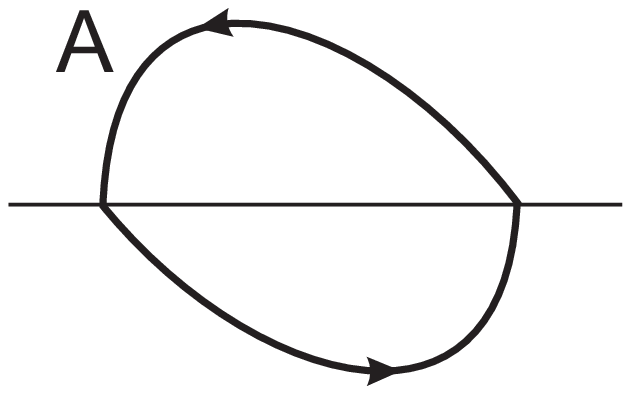}
\end{center}
\end{minipage} \hfill
\begin{minipage}[b]{0.311\linewidth}
\begin{center}
\psfrag{A}{$\Sigma=\Gamma$}\epsfxsize=2.6cm
\epsfbox{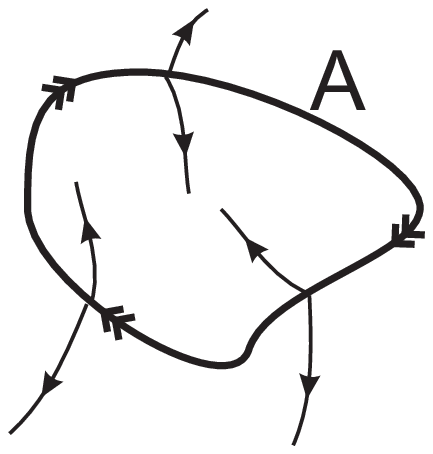}
\end{center}
\end{minipage} \hfill
\begin{minipage}[b]{0.35\linewidth}
\begin{center}
\psfrag{A}{$\Gamma$}
\epsfxsize=4cm  \epsfbox{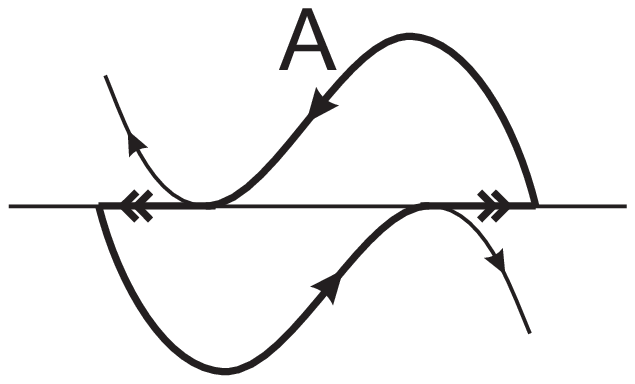}
\end{center}
\end{minipage}
\caption{\small{Possible kinds of pseudo-cycles.}}\label{fig canard}
\end{figure}

\begin{figure}[h!]
\begin{center}
\epsfxsize=11cm \epsfbox{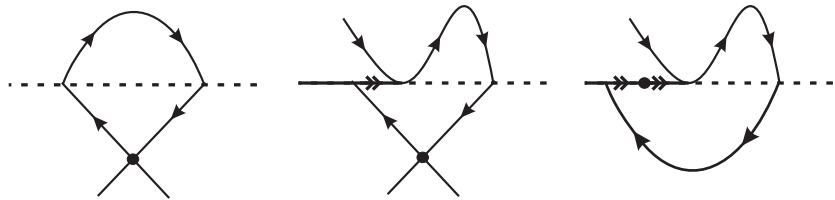}
\end{center}
\caption{\small{Examples of pseudo-graphs.}}\label{fig grafico}
\end{figure}

\begin{definition}\label{invariancia}
A set $A\subset\mathbb{R}^{2}$ is $\mathbf{Z}$\textbf{-invariant} if for each $p\in A$ and all global trajectory $\Gamma_{Z}(t,p)$ passing through $p$ it holds $\Gamma_{Z}(t,p) \subset A$.
\end{definition}

\begin{definition}
A set $M\subset\mathbb{R}^{2}$ is \textbf{minimal for} $\mathbf{Z \in \Omega}$ if
\begin{enumerate}
\item[(i)] $M\neq\emptyset$;
\item[(ii)] $M$ is compact;
\item[(iii)] $M$ is $Z$-invariant;
\item[(iv)] $M$ does not contain proper subset satisfying (i), (ii) and (iii).
\end{enumerate}
\end{definition}

\begin{remark}\label{remark 2}
Observe that the pseudo-cycle $\Gamma$ on the right of Figure 5 is the $\alpha$-limit set  of all global trajectories on a neighborhood of it, however $\Gamma$ is not $Z$-invariant according to Definition \ref{invariancia}. This phenomenon point out a distinct and amazing aspect  not predicted for the classical theory about smooth vector fields where the $\alpha$ and $\omega$-limit sets are invariant sets.
\end{remark}

\section{Minimal Sets with non-empty interior}\label{secao minimal}

Finding limit sets of trajectories of vector fields is one of the most important tasks of the qualitative theory of dynamical systems. In the literature there are several recent papers (see for instance \cite{Eu-fold-cusp,Eu-fold-sela,Marcel,Kuznetsov}) where the authors explicitly exhibit the phase portraits of some NSVFs with their unfoldings. However, all the limit sets exhibited have trivial minimal sets (i.e., the minimal sets are equilibria, pseudo equilibria, cycles or pseudo-cycles). At this section we present a non-trivial minimal set in the NSVFs scenario.

\begin{example}\label{exemplo1}
Consider $Z=(X,Y) \in \Omega$, where $X(x,y)=(1,-2x)$, $Y(x,y)=(-2,4x^{3}-2x)$ and $\Sigma=f^{-1}(0)=\{(x,y)\in\mathbb{R}^{2};y=0\}$. The parametric equation for the integral curves of $X$ and $Y$ with initial conditions $(x(0),y(0))=(0,k_{+})$ and $(x(0),y(0))=(0,k_{-})$, respectively, are known and its algebraic expressions are given by $y=-x^{2}+k_{+}$ and $y=x^{4}/2-x^{2}/2+k_{-}$, respectively. It is easy to see that $p=(0,0)$ is an invisible tangency point of $X$ and a visible one of $Y$. It is also easy to note that the points $p_{\pm}=({\pm}\sqrt{2}/2,0)$ are both invisible tangency points of $Y$. Note that between $p_{-}$ and $p$ there exists an escaping region and between $p$ and $p_{+}$ a sliding one. Further, every point between $(-1,0)$ and $p_{-}$ or between $p_{+}$ and $(1,0)$ belong to a sewing region. Consider now the particular trajectories of $X$ and $Y$ for the cases when $k_{+}=1$ and $k_{-}=0$, respectively. These particular curves delimit a bounded region of plane that we call $\Lambda$ and it is the main object of this section. Figure \ref{fig minimal} summarizes these facts.

\begin{figure}
\begin{center}
\begin{overpic}[height=0.20\paperheight]{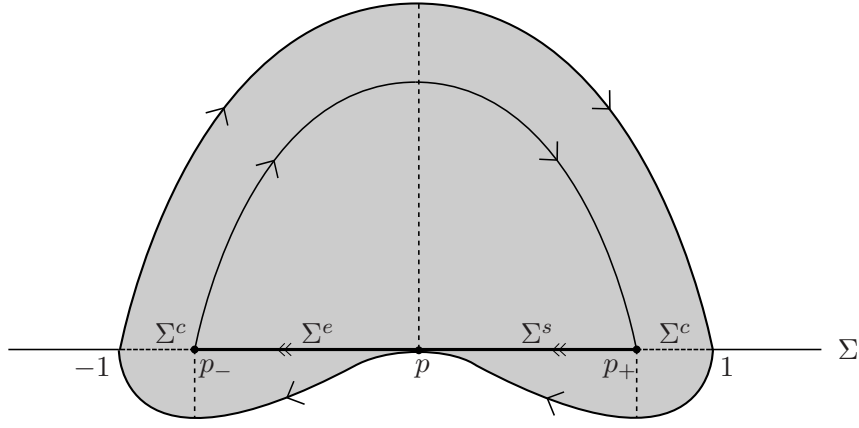}
\put(73.2,6){$p_{+}$} \put(23.5,6){$p_{-}$} \put(50,6){$p$} \put(36,9.6){$\Sigma^{e}$} \put(63,9.6){$\Sigma^{s}$} \put(18,9.6){$\Sigma^{c}$} \put(80,9.6){$\Sigma^{c}$} \put(102,7.6){$\Sigma$} \put(8,5.5){$-1$}  \put(87.5,5.5){$1$}
\end{overpic}
\caption{\small{Special integral curves and tangency points.}} \label{fig minimal}
\end{center}
\end{figure}

\end{example}

\begin{proposition}\label{minimal} Consider $Z=(X,Y) \in \Omega$, where $X(x,y)=(1,-2x)$, $Y(x,y)=(-2,4x^{3}-2x)$ and $\Sigma=f^{-1}(0)=\{(x,y)\in\mathbb{R}^{2};y=0\}$. The set
\begin{equation}\label{equacao conjunto minimal}
\Lambda=\{(x,y)\in\mathbb{R}^{2};-1\leq x\leq 1\;\; \mbox{and}\;\; 1-x^{2}\leq y\leq x^{4}/2-x^{2}/2\}.
\end{equation}
 is a minimal set for $Z$.
\end{proposition}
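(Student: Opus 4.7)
My plan is to verify the four defining conditions of a minimal set. Nonemptyness and compactness of $\Lambda$ are immediate from its closed-form description, so the work lies in proving $Z$-invariance and the absence of any proper nonempty closed $Z$-invariant subset. The main analytic tools are the conserved quantities $C=y+x^{2}$ for $X$ and $K=y-x^{4}/2+x^{2}/2$ for $Y$, together with the switching structure on $\Sigma\cap[-1,1]$: from $Xf$ and $Yf$ one reads off sliding on $(0,\sqrt{2}/2)$, escape on $(-\sqrt{2}/2,0)$, sewing on $(-1,-\sqrt{2}/2)\cup(\sqrt{2}/2,1)$, and tangencies at $0$ and $\pm\sqrt{2}/2$, exactly as described in Example \ref{exemplo1}.

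For invariance, I would identify $\partial\Lambda$ with the pseudo-cycle (in the sense of Definition \ref{definicao pseudo cycle}) formed by the $X$-orbit $\{C=1\}$ from $(-1,0)$ through $(0,1)$ to $(1,0)$ and the $Y$-orbit $\{K=0\}$ from $(1,0)$ through the visible tangency $(0,0)$ back to $(-1,0)$. Conservation of $C$ along $X$-orbits and of $K$ along $Y$-orbits keeps every interior trajectory strictly between these two arcs. Sewing crossings inside $[-1,1]\times\{0\}\subset\Lambda$ preserve membership in $\Lambda$, and the sliding and escape pieces of $\Sigma$ themselves lie in $\Lambda$; the three Filippov branches at each escape point allowed by Definition \ref{definicao trajetorias} are either $X$- or $Y$-arcs with admissible $C$ and $K$ or $Z^{\Sigma}$-orbits on $\Sigma$, so they stay in $\Lambda$. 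Hence every global trajectory through any point of $\Lambda$ stays in $\Lambda$.

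For minimality, let $F\subset\Lambda$ be a nonempty closed $Z$-invariant subset and fix $p\in F$. I would first show that some global trajectory through $p$ reaches $(0,0)$: an $X$-orbit of level $C\in(0,1)$ in $\Sigma^{+}$ meets $\Sigma$ at $(\sqrt{C},0)$, which lies either in the sliding region (and then $Z^{\Sigma}$ carries it leftward to $(0,0)$) or in the sewing region (in which case the $Y$-arc through $\Sigma^{-}$ re-enters $\Sigma$ at $(\sqrt{1-C},0)$ in the sliding region, from which sliding again reaches $(0,0)$); the $\Sigma^{-}$ case is symmetric. Invariance forces every global trajectory from $(0,0)$ to lie in $F$, so in particular $F$ contains the $Z^{\Sigma}$-orbit that sweeps the whole escape segment $[-\sqrt{2}/2,0]\times\{0\}$. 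At each escape point $q=(x_{e},0)$ the three branches of Definition \ref{definicao trajetorias} are the $X$-arc $\{C=x_{e}^{2}\}$, the $Y$-arc $\{K=x_{e}^{2}(1-x_{e}^{2})/2\}$ dipping into $\Sigma^{-}$ and re-emerging at $(-\sqrt{1-x_{e}^{2}},0)$ where it produces the $X$-arc $\{C=1-x_{e}^{2}\}$, and continued sliding. As $x_{e}$ ranges over $(-\sqrt{2}/2,0)$, these branches realise every $X$-arc with $C\in(0,1)$ and every $Y$-arc with $K\in(0,1/8)$; together with the boundary arcs $\{C=1\}$ and $\{K=0\}$ this exhausts the foliations of $\Lambda\cap\Sigma^{+}$ and $\Lambda\cap\Sigma^{-}$. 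The landing points of these arcs accumulate densely on the sliding and sewing intervals of $\Sigma$, and closedness of $F$ then forces $F=\Lambda$.

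The chief obstacle is this last step: proving that the one-parameter family of global trajectories obtained by branching at $(0,0)$ and along the escape segment truly sweeps out a dense subset of $\Lambda$ rather than leaving a nontrivial invariant remainder, and in particular handling the critical levels $C=1/2$ and $K=1/8$, where the emerging arcs terminate at the invisible tangencies $\pm\sqrt{2}/2$ and the branching pattern degenerates.
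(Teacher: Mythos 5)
Your verification of nonemptiness, compactness and $Z$-invariance is sound, and in fact more detailed than the paper's own treatment (the first integrals $C=y+x^{2}$ and $K=y-x^{4}/2+x^{2}/2$ make the boundary and branching analysis transparent). The genuine gap is exactly the one you flag yourself: the minimality step. Arguing that the forward branches emanating from $(0,0)$ and from the escaping segment sweep out a \emph{dense} subset of $\Lambda$, and then invoking closedness of the invariant subset $F$, obliges you to catalogue precisely which $X$-levels and $Y$-levels are realised, to handle the degenerate levels $C=1/2$ and $K=1/8$ at the invisible tangencies $p_{\pm}$, and to prove the density of the landing points on $\Sigma$. As written, none of this is carried out, so the proof does not close.

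The paper avoids the sweeping analysis entirely by a two-sided connection argument. For \emph{arbitrary} $p_{1},p_{2}\in\Lambda$, the positive global trajectory of $p_{1}$ reaches the sliding segment and slides in finite time to $p=(0,0)$, while the negative global trajectory of $p_{2}$ reaches the escaping segment, where Definition \ref{definicao trajetorias} forces it to follow $Z^{\Sigma}$ in backward time, again arriving at $p$. Concatenating the two at the regular tangency $p$ yields a single global trajectory passing through both $p_{1}$ and $p_{2}$. Since Definition \ref{invariancia} requires an invariant set to contain \emph{every} global trajectory through each of its points, any nonempty $Z$-invariant $F\subset\Lambda$ containing $p_{1}$ must contain $p_{2}$; hence $F=\Lambda$, with no density or closedness argument needed. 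The repair of your plan is therefore: instead of enumerating the forward branches out of $(0,0)$ and asking which points of $\Lambda$ they hit, run the trajectory \emph{backward from the target point} $q_{2}$ --- it necessarily lands on the escaping segment and is carried back to $(0,0)$, and its time reversal is exactly the forward branch you were missing. (A side remark: the paper's stated $Y=(-2,4x^{3}-2x)$ is inconsistent with its own integral curves and with the sliding/escaping structure described in Example \ref{exemplo1}; you implicitly worked with the version compatible with the figure, for which $K$ is indeed a first integral of $Y$, and that is the right reading.)
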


\begin{proof}It is easy to see that $\Lambda$ is compact and has non-empty interior. Moreover, by Definition $\ref{definicao trajetorias}$, on $\partial \Lambda \setminus\{p\}$ we have uniqueness of trajectory (here $\partial B$ means the boundary of the set $B$). Note that a global trajectory of any point in $\Lambda$ meets $p$ for some time $t^{*}$. Since $p$ is a visible tangency point for $Y$ and $p\in\overline{\partial\Sigma^{e}}\cap\overline{\partial\Sigma^{s}}$, according to the fourth bullet of Definition \ref{definicao trajetorias} any trajectory passing through $p$ remain in $\Lambda$. Consequently $\Lambda$ is $Z$-invariant. Moreover, given $p_1, p_2 \in \Lambda$ the positive global trajectory by $p_1$ reaches the sliding region between $p$ and $p_+$ and slides to $p$. The negative global trajectory by $p_2$ reaches the escaping region between $p$ and $p_-$ and slides to $p$. So, there exists a global trajectory connecting $p_1$ and $p_2$. Now, let $\Lambda^{\prime} \subset \Lambda$ be a $Z$-invariant set. Given $q_1 \in \Lambda^{\prime}$ and $q_2 \in \Lambda$ since there exists a global trajectory connecting them we conclude that $q_2 \in \Lambda^{\prime}$. Therefore, $\Lambda^{\prime} = \Lambda$ and $\Lambda$ is a minimal set.
\end{proof}

Other exotic examples can be easily obtained. In Figure \ref{fig8} we show another phase portrait of a NSVF presenting a non-trivial minimal set with non-empty interior.

\begin{figure}[h!]
\begin{center}
\epsfxsize=10cm \epsfbox{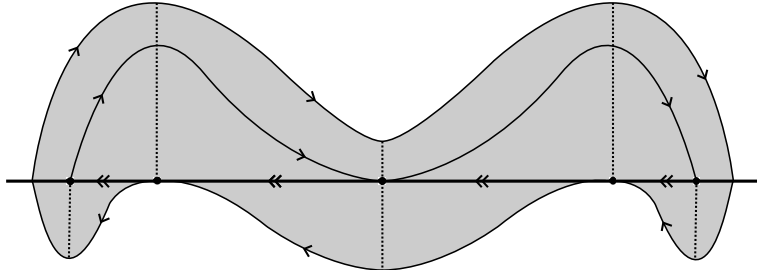} \label{fig outro minimal}
\end{center}
\caption{\small{Non-trivial minimal set presenting non-empty interior.}}\label{fig8}
\end{figure}

\section{Considerations on the Poincaré-Bendixson Theorem for NSVFs}\label{secao teo poincare-bendixson}

This section is dedicated to the Poincaré-Bendixson Theorem and present conditions in order to have a version of this important theorem of the smooth vector fields theory in the NSVFs scenario. In fact, we start considering that $\Sigma=\Sigma^{c}\cup\Sigma^{t}$, i.e., assuming that there is no sliding motion on $\Sigma$. In this case, we show that there is a similar result to the classical Poincaré-Bendixson Theorem just adding some weak hypothesis on $\Sigma$.
Additionally, we observe that some new and unpredictable phenomena in the classical theory of smooth vector fields could happen by considering that there exists sliding and escaping regions on $\Sigma$ (as we already have seen in this paper). In fact, the presence of escaping and sliding points on $\Sigma$ necessarily destroys the uniqueness of global trajectories through a point $p \in \Sigma^{s} \cup \Sigma^{e}$. This means that we can not generalize the Poincaré-Bendixson Theorem presented here without assume extra hypothesis.\\

Indeed, consider that $\Sigma=\Sigma^{c}\cup\Sigma^{t}$. In other words, $\Sigma$ has only sewing and tangential points. In this case, we can prove that an analogous to the Poincaré-Bendixson Theorem is true and we have the same cases that we had in the smooth case, adding the pseudo-cycles, the pseudo-graphs and pseudo-equilibria (see Theorem \ref{PB}, Section \ref{secao enunciados}).\\

We observe that the three first possibilities for the $\omega$-limit set of $\Gamma_{Z}(t,p)$ in Theorem \ref{PB} are related with the classical Poincaré-Bendixson Theorem. Furthermore, the other possibilities appear due to the special type of discontinuous region $\Sigma$ that we are considering (note that there are no escaping or sliding points on $\Sigma$). The proof of Theorem \ref{PB} takes into account the classical Poincaré-Bendixson Theorem and the concept of Poincaré return map for NSVFs.

\begin{proof}[Proof of Theorem \ref{PB}] Consider $p \in V$. If there exists a time $t_0>0$ such that the global trajectory $\Gamma_{Z}(t,p)$ by $p$ does not collide with $\Sigma$ for $t>t_0$ then we can apply the classical Poincaré-Bendixson Theorem in order to conclude that one of the three first cases (i), (ii) or (iii) happens. Otherwise, there exists a sequence $(t_{i})\subset\mathbb{R}$ of positive times, $t_i\rightarrow+\infty$, such that $p_i=\Gamma_{Z}(t_{i},p)\in\Sigma$.

The hypothesis that we do not have sliding motion implies $Xf(p_i)\cdot Yf(p_i)\geq0$. We observe that if $Xf(p_i)=0$ and $Yf(p_i)\neq0$ (resp., $Xf(p_i)\neq0$ and $Yf(p_i)=0$) then the trajectory of $X$ (resp., $Y$) passing through $p_i$ has an odd contact with $\Sigma$. For each $i\in\N$ we say that $p_i\in T(p)$ if one of the following cases happens: (i) $Xf(p_i)\cdot Yf(p_i) >0$, (ii) $Xf(p_i)=0$ and $Yf(p_i)\neq0$, (iii) $Xf(p_i)\neq0$ and $Yf(p_i)=0$ or (iv) $Xf(p_i)=Yf(p_i)=0$ and both have an odd contact order with $\Sigma$. If $Xf(p_i)=Yf(p_i)=0$  and the contact order of both is even then we say that $p_i\in N(p)$. Observe that, by hypothesis,   $N(p)$ is a finite set. We separate the proof in two cases: $T(p)$ is finite and $T(p)$ is not finite.

Assume that $T(p)$ is a finite set. We denote by $n_p$ and $t_p$ the number of elements of the sets $N(p)$ and $T(p)$ respectively. According to Definition \ref{definicao trajetorias}, a global trajectory of $Z$ by $p_l \in N(p)$ can follows one of two distinct paths. Let us denote by $\Gamma_m$ an arc of $\Gamma_Z(t,p)$ connecting two consecutive points $p_i$ and $p_{i+1}$, $i \in \N$. In this case there exists at most $2^{n_p} + t_p$ arcs $\Gamma_m$ of $\Gamma_{Z}(t,p)$. So, there exists a (sub)set $\Upsilon \subset \{ 1,2, \hdots, 2^{n_p} + t_p \}$ such that $\Gamma = \bigcup_{j\in \Upsilon} \Gamma_j$ is a closed orbit  intersecting $\Sigma$ (i.e., a pseudo-cycle) contained in $\Gamma_{Z}(t,p)$ and with the property that $\Gamma_{Z}(t,p)$ visit each arc $\Gamma_j$ of $\Gamma$ an infinite number of times. In what follows we prove that $\omega(\Gamma_{Z}(t,p)) = \Gamma$. In fact, as $\Gamma_{Z}(t,p)$ must visit each arc $\Gamma_j$ of $\Gamma$ an infinite number of times then $\Gamma\subset \omega(\Gamma_{Z}(t,p))$. On the other hand, if $x_0\in\omega(\Gamma_{Z}(t,p))$ then there exists a sequence $(s_k)\subset\R$, $s_k\rightarrow+\infty$, such that $\Gamma_Z(s_k,p)=x_k\rightarrow x_0$. Moreover, since $\Gamma_{Z}(t,p)$ also is composed by a finite number of arcs $\Gamma_m$, $s_k\rightarrow+\infty$ and $\Gamma_{Z}(t,p)$ has no equilibria (otherwise it does not visit $\Sigma$ infinitely many times), there exists a subsequence $(x_{k_{j}})$ of  $(x_{k})$ that visits some arcs $\Gamma_m$ infinitely many times. Since  $\Gamma$ includes all arcs $\Gamma_j$ for which the global trajectory visit $\Gamma_j$ for an infinite sequence of times,  $x_{k_{j}}\in\Gamma$ a compact set, and consequently $x_0\in\Gamma$.

Now assume that $T(p)$ in not a finite set. In this case, there exist a point $q \in \Sigma$ and a subsequence $(t_{i_j})=(s_j)$ of $(t_i)$ such that
\begin{equation}\label{fix}
\displaystyle\lim_{j\to\infty}\Gamma_{Z}(s_{j},p)=q
\end{equation}
since $\Gamma^{+}_{Z}(t,p) \subset K$, a compact set. Observe that $q\in \omega(\Gamma_{Z}(t,p))\cap\Sigma\neq\emptyset.$  As we do not have sliding motion, for each $x\in\omega(\Gamma_{Z}(t,p))\cap\Sigma$, we have only two options for it: either $x$ is a singular tangency or $x$ is a regular  point.

If there exists $x_0\in\omega(\Gamma_{Z}(t,p))\cap\Sigma$  a  singular tangency then $\omega(\Gamma_{Z}(t,p)) = \{ x_0 \}$. In fact, when both $X$ and $Y$ have an invisible tangency point at $x_0$ and there exists a sequence $(s_k)\subset\R$, $s_k\rightarrow+\infty$, such that $\Gamma_Z(s_k,p)=x_k\rightarrow x_0$ then there is a small  neighborhood $V_{x_0}$ of $x_0$ in $V$ such that all trajectory of $Z$ that starts at a point of $V_{x_0}$ converges to $x_0$. See Figure \ref{s-sing}. Therefore, $\omega(\Gamma_{Z}(t,p)) = \{ x_0 \}$ and $x_0 =q$.

\begin{figure}
\begin{center}
\begin{overpic}[height=4cm]{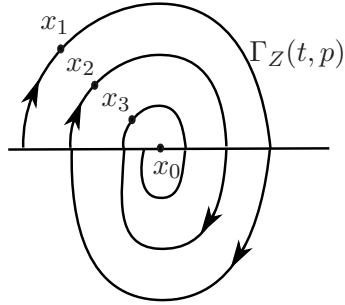}
\put(10,85){$x_1$}
\put(18,72){$x_2$}
\put(29,61){$x_3$}
\put(45,40){$x_0$}
\put(75,75){$\Gamma_{Z}(t,p)$}
\end{overpic}
\caption{\small{Case where there exists a  singular tangency in $\omega(\Gamma_{Z}(t,p))\cap\Sigma$.}} \label{s-sing}
\end{center}
\end{figure}

Suppose now that all points in $\omega(\Gamma_{Z}(t,p))\cap\Sigma$ are regular ones. Again we separate the analysis in two cases: either $\omega(\Gamma_{Z}(t,p))$ contains equilibria or contains no equilibria. Consider the case when $\omega(\Gamma_{Z}(t,p))$ contains no equilibria. Let $q$ as in Equation \eqref{fix}. If $q\in T(q)$ then it is clear that the local trajectory passing through $q$ is unique and $\Gamma_{Z}(\varepsilon,q) \in \omega(\Gamma_{Z}(t,p))$  for $\varepsilon>0$ sufficiently small. If  $q\in N(q)$ then, since $q$ can not be a singular tangency, $q$ is a visible tangency for both $X$ and $Y$. So, there are two possible choices for the positive local trajectory of $Z$ passing through $q$ and at least one of them
  is such that it is contained in  $\omega(\Gamma_{Z}(t,p))$. By continuity, the global trajectory $\Gamma(t,q)$ of $Z$ that passes through $q$, contained in $\omega(\Gamma_{Z}(t,p))$, must come back to a neighborhood $V_q$ of $q$ in $\Sigma$. The late affirmation is true, because if it does not come back then it remains in $\Sigma^+$ or in $\Sigma^-$. So, the set $\omega(\Gamma(t,q))$ is a periodic orbit of $X$ or $Y$, because there are no singular points in  $\omega(\Gamma_{Z}(t,p))$. But it is a contradiction with the fact that the orbit $\Gamma_{Z}(t,p)$ must visit any neighborhood of $q$ infinitely many times. Moreover, by the Jordan Curve Theorem, $\Gamma(t,q)\cap V_q = \{ q \}$, otherwise there exists a flow box not containing $q$ for which $\Gamma(t,q)$ and, consequently, $\Gamma(t,p)$, do not depart it. This is a contradiction with the fact that the orbit $\Gamma_{Z}(t,p)$ must visit any neighborhood of $q$ infinite many times. Therefore, $\Gamma_Z(t,q)$ is closed (i.e., is a pseudo-cycle) and $\omega(\Gamma_{Z}(t,p))=\Gamma_Z(t,q)$.

The remaining case is when $\omega(\Gamma_{Z}(t,p))$ has equilibria either of $X$ or of $Y$. In this case for each regular point $q\in\omega(\Gamma_{Z}(t,p))$ consider the local orbit $\Gamma_{Z}(t,q)$ which is contained in $\omega(\Gamma_{Z}(t,p))$. The set $\omega(\Gamma_{Z}(t,q))$ can not be a periodic orbit or a graph contained in $\Sigma^+$ or in $\Sigma^-$, because the orbit $\Gamma_{Z}(t,p)$ must visit any neighborhood of $q$ infinite many times. So, the unique option is that $\omega(\Gamma_{Z}(t,q))=\{z_i\}$ where $z_i$ is an equilibrium of $X$ or of $Y$. Similarly, the $\alpha$-limit set $\alpha(\Gamma_{Z}(t,q))=\{z_j\}$ where $z_j$ is an equilibrium of $X$ or of $Y$.  Thus, with an appropriate ordering of the equilibria $z_k$, $k=1,2\dots,m$, (which may not be distinct) and regular orbits $\Gamma_k\subset\omega(\Gamma_{Z}(t,p))$, $k=1,2\dots,m$, we have
\[ \alpha(\Gamma_k)=z_k\quad\mbox{ and }\quad \omega(\Gamma_k)=z_{k+1}\]
for $k=1,\dots,m$, where $z_{m+1}=z_1$. It follows that the global trajectory $\Gamma_Z(t,p)$ either spirals down to or out toward $\omega(\Gamma_Z(t,p))$ as $t\rightarrow+\infty$. It means that in this case  $\omega(\Gamma_Z(t,p))$ is a pseudo-graph composed by the equilibria $z_k$ and the arcs $\Gamma_k$ connecting them, $k=1,\dots,m$.

This concludes the proof of Theorem \ref{PB}.\end{proof}

Now we perform the proof of Corollary \ref{corolario teo Poinc-Bend} (see Section \ref{secao enunciados}). In Example \ref{exemplo multiplos cjtos limite} below we illustrate its consequences.\\

\begin{proof}[Proof of Corollary \ref{corolario teo Poinc-Bend}]
In fact, since by Definition \ref{limitset} the $\omega$-limit set of a point is the union of the $\omega$-limit set of all global trajectories passing through it, the conclusion is obvious.
\end{proof}

\begin{example}\label{exemplo multiplos cjtos limite}

Consider Figure \ref{fig exemplo multiplos cjtos limite}. Here we observe a NSVF without sliding motion on $\Sigma$ where the conclusions of Theorem \ref{PB} and Corollary \ref{corolario teo Poinc-Bend} can be observed. Since the uniqueness of trajectories by $p$ is not achieved (neither for positive nor for negative times) both the $\alpha$ and the $\omega$-limit sets are disconnected sets. The $\alpha$-limit set of $p$ is composed by the focus $\alpha_1$ and the s-singular tangency point $\alpha_2$. The $\omega$-limit set of $p$ is composed by the saddle $\omega_1$ and the periodic orbit $\Gamma_1$.

\begin{figure}[!h]
\begin{center}
\psfrag{A}{$p$} \psfrag{B}{$\alpha_1$} \psfrag{C}{$\omega_1$}
\psfrag{D}{$\Gamma_1$} \psfrag{E}{$\alpha_2$}
\psfrag{F}{\hspace{.7cm}$\Sigma^s$} \psfrag{G}{} \epsfxsize=9cm
\epsfbox{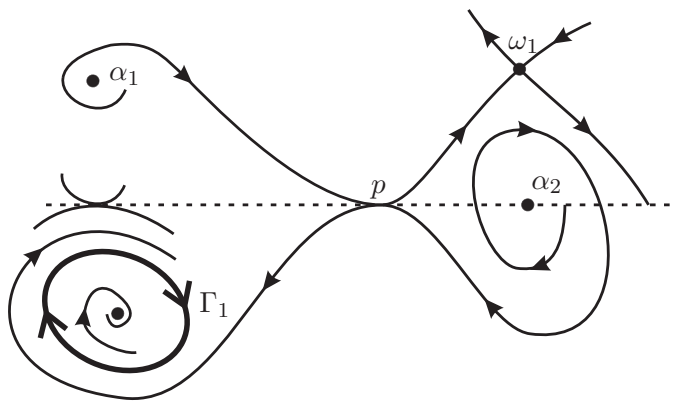} \caption{\small{Both the $\alpha$-limit set $\{ \alpha_1 , \alpha_2\}$ and the $\omega$-limit set $\{ \omega_1 , \Gamma_1 \}$ of the point $p$ are disconnected. Sliding motion on $\Sigma$ is not allowed.}} \label{fig exemplo multiplos cjtos limite}
\end{center}
\end{figure}

\end{example}

When we consider escaping and/or sliding points on $\Sigma$ (i.e., $\Sigma^s \cup \Sigma^e \neq \emptyset$), each subset $N\subset\Sigma^{e}\cup\Sigma^{s}$ is necessarily not invariant, because on this region there is no uniqueness of solution. Actually, if we take a point $q\in N$, there will exist infinitely
many solutions passing through $q$ when the time goes to future or past (see Examples \ref{exemplo varios conj limite} and \ref{exemplo multiplos cjtos limite} and Remark \ref{remark 2}).
For this reason, it is possible to occur some interesting phenomena where classical properties of both limit and minimal sets do not work. In particular we do not have a version of the Poincaré-Bendixson Theorem as stated in the classical theory for this scenario.\\

For instance, it is easy to see that the minimal set $\Lambda$ showed in Example \ref{exemplo1}
and in \eqref{equacao conjunto minimal} has interior non-empty, differently of the classical result
in smooth vector fields theory where we have the opposite situation. Nevertheless, $\Lambda$ does not look like neither a cycle nor an equilibrium,
i.e., $\Lambda$ is a non-trivial minimal set. The following proposition point out another surprising aspect of the minimal set $\Lambda$, not predicted for the classical theory.

\begin{proposition}\label{proposicao trajetoria nao densa}
Let $\Lambda$ given by \eqref{equacao conjunto minimal}. If $q \in \Lambda$ then there exists a trajectory passing trough $q$ that is not dense in $\Lambda$.
\end{proposition}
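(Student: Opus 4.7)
My plan is to exhibit, for every $q\in\Lambda$, an explicit global trajectory through $q$ whose image is a finite union of smooth analytic arcs; since such a set has empty interior in $\mathbb{R}^2$ while $\Lambda$ has non-empty interior (Proposition \ref{minimal}), that trajectory cannot be dense in $\Lambda$.

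The first step is to observe that the boundary $\partial\Lambda$ itself is a pseudo-cycle of $Z$. Indeed, $\partial\Lambda=\gamma_X\cup\gamma_Y$, where $\gamma_X=\{y=1-x^{2},\ -1\le x\le 1\}$ is a trajectory of $X$ contained in $\Sigma^+\cup\Sigma$ and $\gamma_Y=\{y=x^{4}/2-x^{2}/2,\ -1\le x\le 1\}$ is a trajectory of $Y$ contained in $\Sigma^-\cup\Sigma$ with a visible tangential contact with $\Sigma$ at $p=(0,0)$. At the corner points $(\pm1,0)$ both vector fields are transverse to $\Sigma$ with components of the same sign, so the sewing rule in Definition \ref{definicao trajetorias} concatenates $\gamma_X$ and $\gamma_Y$ there; at $p$ the regular-tangency clause allows the continuation to remain on $\gamma_Y$. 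One checks there are no equilibria on $\gamma_X\cup\gamma_Y$ (since $X_1\equiv 1$ and $Y_1\equiv -2$) and no pseudo-equilibrium on $\Sigma^s\cup\Sigma^e$ for this example, so $\partial\Lambda$ is a pseudo-cycle in the sense of Definition \ref{definicao pseudo cycle}. For $q\in\partial\Lambda$ the pseudo-cycle itself is a global trajectory through $q$, contained in a one-dimensional set, hence not dense in $\Lambda$.

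For $q\in\mathrm{int}(\Lambda)$ I would adapt the argument already used in the proof of Proposition \ref{minimal}. If $q\in\Sigma^+$, the positive $X$-orbit through $q$ lies on a parabola $y=-x^{2}+k$ with $k\in(0,1)$ and meets $\Sigma$ at $(\sqrt k,0)$, which is either a sliding point, the tangency $p_+$, or a point of $\Sigma^c$ on the right; in every case the trajectory can be extended so that in finite forward time it reaches $\Sigma^s$ (directly, or after a short $Y$-arc crossing from $\Sigma^c$ back down to $\Sigma^s$), and thereafter slides uniquely to $p$. Symmetrically, the negative trajectory from $q$ reaches $p$ in finite backward time through $\Sigma^e$. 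The analogous construction works for $q\in\Sigma^-$ (using $Y$-orbits) and for $q\in\Sigma\cap\mathrm{int}(\Lambda)$ (applying the appropriate clause of Definition \ref{definicao trajetorias} at sewing, sliding, escaping, or tangency points). Once the trajectory has reached $p$ from both sides, I choose at $p$ the continuation that lies on $\gamma_Y$, which then passes through the sewing point $(-1,0)$ onto $\gamma_X$ and through $(1,0)$ back onto $\gamma_Y$; this gives a legal global trajectory that winds around $\partial\Lambda$ indefinitely in both directions.

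The resulting global trajectory through $q$ is the union of $\partial\Lambda$ with at most a few short analytic arcs joining $q$ to $\partial\Lambda$. Being a finite union of real-analytic arcs in $\mathbb{R}^{2}$, it has empty interior and therefore cannot be dense in $\Lambda$, which contains an open disk. The main technical obstacle is the case analysis in the previous paragraph: one must verify, using the explicit formulas for the integral curves of $X$ and $Y$ together with the remark characterising $V_p$ in terms of the total contact order $m$, that in each of the possible positions where an interior $X$- or $Y$-orbit first meets $\Sigma$ (a sliding point, a tangency $p_\pm$, or a sewing segment), Definition \ref{definicao trajetorias} does produce a legal continuation that slides to $p$ in finite time and then joins $\partial\Lambda$.
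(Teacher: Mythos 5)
Your proposal is correct and follows essentially the same route as the paper's proof: exhibit the boundary $\partial\Lambda$ as a global trajectory (pseudo-cycle), connect $q$ to it via a finite arc through the tangency point $p=(0,0)$, and conclude non-density because the resulting one-dimensional set cannot be dense in a region with non-empty interior. Your version merely spells out the case analysis and the concatenation rules in more detail than the paper does.
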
\begin{proof}
Observe Figure \ref{fig minimal}. By Definition \ref{definicao trajetorias}, there exists a global trajectory $\Gamma_0$ of $Z$ with coincides to the closed curve $\partial \Lambda$, the boundary of $\Lambda$. Moreover, as shown at the proof of Proposition \ref{minimal}, given an arbitrary point $q \in \Lambda$, each global orbit passing through $q$ also reaches $p=(0,0)$ in finite time. Let $\Gamma_1$ be an arc of trajectory of $Z$ joining $q$ and $p$. So, $\Gamma = \Gamma_0 \cup \Gamma_1$ is a non-dense trajectory of $Z$ in $\Lambda$ passing through $q \in \Lambda$.
\end{proof}

\noindent {\textbf{Acknowledgments.} The first author is partially supported by a FAPESP-BRAZIL grant
2007/06896-5. The second author is partially supported by FAPESP-BRAZIL grant 2012/00481-6. The third author is supported by the FAPESP-BRAZIL grants 2010/18015-6 and 2012/05635-1}

\end{document}